\newcommand{\Rl}{\mathbb{R}}
\newcommand{\Cplx}{\mathbb{C}}
\newcommand{\Bc}{\mathcal{B}}
\newcommand{\Sc}{\mathcal{S}}
\newcommand{\sa}{{\mathrm{sa}}}
\numberwithin{equation}{section}
\newtheorem{theorem}{Theorem}[section]
\newtheorem{definition}[theorem]{Definition}
\newtheorem{lemma}[theorem]{Lemma}
\theoremstyle{remark}
\title[$n$-times Fr\'echet differentiability 
on $\Sc^p,$]{Necessary and sufficient conditions 
for $n$-times Fr\'echet differentiability 
on $\Sc^p,$ $1 <p<\infty.$}
\author[C.~Le Merdy]{Christian Le Merdy}
\email{clemerdy@univ-fcomte.fr}
\address{Laboratoire de Math\'ematiques de Besan\c con, UMR 6623, 
CNRS, Universit\'e Bourgogne Franche-Comt\'e,
25030 Besan\c{c}on Cedex, FRANCE}
\author[E.~McDonald]{Edward McDonald}
\email{edward.mcdonald@unsw.edu.au}
\address{School of Mathematics \& Statistics, University of NSW,
Kensington NSW 2052, AUSTRALIA}
\subjclass[2020]{47A55, 47B10}
\date{\today}
\begin{document}

\maketitle{}

\begin{abstract}
Let $1<p<\infty$ and let $n\geq 1$.
It is proved that a function $f:\Rl\to \Cplx$ is $n$-times Fr\'echet 
differentiable on $\Sc^p$ at every self-adjoint operator
if and only if $f$ is $n$-times differentiable, $f',f'',\ldots,f^{(n)}$ 
are bounded and $f^{(n)}$ is uniformly continuous.
\end{abstract}

\section{Introduction}
Let $H$ be an infinite dimensional separable 
Hilbert space. For $1<p<\infty$, denote by $\Sc^p:=\Sc^p(H)$ the $p$-Schatten class
on $H$, with norm $\Vert\,\cdotp\Vert_p$.
For $n\geq 1$, denote by $C^n_b(\Rl)$ the space of 
$n$-times continuously differentiable functions 
$f:\Rl\to \Cplx$ such that $f',f'',\ldots, f^{(n)}$ are all 
bounded\footnote{Note that $f$ itself is \emph{not} assumed to be bounded.} 
and $C^\infty_b(\Rl) := \bigcap_{n\geq 1} C^n_b(\Rl).$
We denote the self-adjoint (real) 
subspace of $\Sc^p$ by $\Sc^p_{\sa}$.

A famous result of Potapov and Sukochev \cite{PS-acta} states that if 
$f:\Rl\to \Cplx$ is 
Lipschitz continuous, $A$ is a (potentially unbounded) self-adjoint 
operator on $H$ and $X\in \Sc^p_{\sa}$, then
\begin{equation*}
f(A+X)-f(A) \in \Sc^p.
\end{equation*}
It therefore makes sense to define a function
\begin{equation}\label{FunctionPS}
\varphi_{A,p}:\Sc^p_{\sa}\to \Sc^p,\quad \varphi_{A,p}(X) = f(A+X)-f(A).
\end{equation}
The following definitions are in line with \cite[Definition 3.2]{LeMerdySkripka2019}.

\begin{definition}
Let $1<p<\infty$, let $n\geq 1$ and let $f:\Rl\to \Cplx$ be
Lipschitz continuous.

\begin{itemize}
\item [(1)] 
Let $A$ be a potentially unbounded self-adjoint operator on $H$.
  
We say that $f$ is $1$-time Fr\'echet differentiable on $\Sc^p$ at $A$ 
if there exists a bounded linear map
\begin{equation*}
D_p^1f(A) \in \Bc(\Sc^p,\Sc^p),\quad X\mapsto D_pf(A)[X]
\end{equation*}
such that
\begin{equation*}
\|f(A+X)-f(A)-D_p^1f(A)[X]\|_p = o(\|X\|_p)
\end{equation*}
as $\|X\|_p\to 0$, $X\in \Sc^p_{sa}$.
    
For $n\geq 2$, we say that $f$ is $n$-times Fr\'echet differentiable 
on $\Sc^p$ at $A$ if $f$ is $(n-1)$-times Fr\'echet differentiable on $\Sc^p$ at 
$A+X$ for every $X$ in an $\Sc^p_{sa}$-neighborhood of $0$ 
and there exists a bounded $n$-multilinear operator
\begin{equation*}
D^n_pf(A) \in \Bc_n(\Sc^p\times \cdots\times \Sc^p,\Sc^p),\quad (X_1,\ldots,X_n)\mapsto 
D^n_pf(A)[X_1,\ldots,X_n]
\end{equation*}
such that
\begin{align*}
\|D^{n-1}_pf(A+X)[X_1,\ldots,X_{n-1}]-D^{n-1}_pf(A)&[X_1,
\ldots,X_{n-1}]-D^n_pf(A)[X_1,\ldots,X_{n-1},X]\|_p\\
&= o(\|X\|_p)\|X_1\|_p\cdots \|X_{n-1}\|_p
\end{align*}
as $\|X\|_p\to 0,$ $X\in \Sc^p_{sa}$, uniformly for all $X_1,\ldots, X_{n-1}\in \Sc^p$.

\smallskip
\item [(2)] We say that $f$ is $n$-times 
continuously Fr\'echet differentiable on $\Sc^p$ if it is 
$n$-times 
Fr\'echet differentiable on $\Sc^p$ at every self-adjoint operator $A$ and for any 
such $A$, the mapping
$$
\Sc^p_{sa}\to \Bc_{n}(\Sc^p\times \cdots\times \Sc^p,\Sc^p),
\quad X\mapsto D^{n}_pf(A+X),
$$
is continuous.
\end{itemize}
\end{definition}

We note that the function $\varphi_{A,p}$ from (\ref{FunctionPS})
is an $n$-times Fr\'echet differentiable map between the 
Banach spaces $\Sc_\sa^p$ and $\Sc^p$ if and only if 
$f$ is $n$-times 
Fr\'echet differentiable on $\Sc^p$ at $A+X$  for any $X\in \Sc^p_{sa}$. 
In this case,
for any $1\leq m\leq n$,
the $m$th order Fr\'echet derivative of $\varphi_{A,p}$ is the mapping
\begin{equation}\label{Dm}
D^m\varphi_{A,p} = D_p^mf(A+\,\cdotp)
\colon \Sc^p_{sa}\to \Bc_{m}(\Sc^p\times \cdots\times \Sc^p,\Sc^p).
\end{equation}
Consequently $f$ is $n$-times 
continuously Fr\'echet differentiable on $\Sc^p$ if and only
if $\varphi_{A,p}$ is $n$-times continuously Fr\'echet differentiable
for any self-adjoint operator $A$ on $H$.

The aim of this note is to prove the following necessary and 
sufficient characterisation of such functions.

\begin{theorem}\label{main_result}
Let $f$ be a Lipschitz continuous function on $\Rl,$ let $1 < p < \infty$ 
and let $n\geq 1.$ Then the following assertions are equivalent:
\begin{enumerate}[{\rm (i)}]
\item{}\label{continuous_frechet_differentiability} $f$ is $n$-times 
continuously Fr\'echet differentiable on $\Sc^p$,
\item{}\label{frechet_differentiability} For all self-adjoint $A$, 
$f$ is $n$-times Fr\'echet differentiable on $\Sc^p$ at $A$,
\item{}\label{continuity_condition} $f\in C^{n}_b(\Rl)$ and 
$f^{(n)}$ is uniformly continuous on $\Rl.$
\end{enumerate}  
\end{theorem}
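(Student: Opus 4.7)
The plan is to prove the cycle (\ref{continuity_condition}) $\Rightarrow$ (\ref{continuous_frechet_differentiability}) $\Rightarrow$ (\ref{frechet_differentiability}) $\Rightarrow$ (\ref{continuity_condition}). The middle implication is immediate from the definitions, so the work lies in the two extremal directions. These have complementary flavours: one is a constructive realisation of the iterated derivatives under scalar regularity assumptions, the other a rigidity statement extracting scalar smoothness from operator-level Fr\'echet differentiability.

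For (\ref{continuity_condition}) $\Rightarrow$ (\ref{continuous_frechet_differentiability}), I would realise each derivative $D^m_p f(A)$, $1\leq m\leq n$, as a multiple operator integral (MOI) with divided-difference kernel $f^{[m]}$. Under the hypothesis $f\in C^n_b(\Rl)$ with uniformly continuous $f^{(n)}$, Schatten-boundedness results for MOIs (in the tradition of Birman-Solomyak, Peller, Potapov-Sukochev, and \cite{LeMerdySkripka2019}) give bounded $m$-multilinear maps on $\Sc^p$, and a Birman-Solomyak-type telescoping expansion of $f(A+X)-f(A)$ verifies that these MOIs coincide with the iterated Fr\'echet derivatives of $\varphi_{A,p}$. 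Continuous dependence of $A\mapsto D^m_p f(A)$ is then inherited from continuity of the MOI construction in its spectral-measure inputs, with the uniform continuity of $f^{(n)}$ controlling the top-order approximation of $f$ by smoother functions.

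For (\ref{frechet_differentiability}) $\Rightarrow$ (\ref{continuity_condition}) I would induct on $n$, the base case $n=1$ being the theorem of \cite{LeMerdySkripka2019}. In the inductive step, $(n-1)$-fold Fr\'echet differentiability at every $A$ combined with the inductive hypothesis yields $f\in C^{n-1}_b(\Rl)$ with $f^{(n-1)}$ uniformly continuous, and it remains to produce $f^{(n)}$, bounded and uniformly continuous. For boundedness, pick arbitrary $\lambda_0,\ldots,\lambda_n\in\Rl$ and embed them in the spectrum of a diagonal self-adjoint $A$ with orthonormal eigenbasis $(e_j)$; a direct finite-dimensional computation in the reducing subspace spanned by the chosen eigenvectors shows that evaluating $D^n_p f(A)$ on matrix units $e_{j_{k-1}}\otimes e_{j_k}^*$ produces the $n$-th divided difference $f^{[n]}(\lambda_0,\ldots,\lambda_n)$ up to a universal normalisation. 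The boundedness of the multilinear operator $D^n_p f(A)$ then forces $f^{[n]}$ to be bounded on $\Rl^{n+1}$, from which standard divided-difference calculus (combined with the inductive regularity of $f^{(n-1)}$) yields existence and boundedness of $f^{(n)}$ on $\Rl$.

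The main obstacle is the upgrade from boundedness to \emph{uniform continuity} of $f^{(n)}$, since hypothesis (\ref{frechet_differentiability}) imposes no continuous dependence of $D^n_p f(A+X)$ on $X$. My plan here is a contradiction argument: assuming $f^{(n)}$ is not uniformly continuous, pick sequences $(\alpha_k),(\beta_k)\subset\Rl$ with $|\alpha_k-\beta_k|\to 0$ but $|f^{(n)}(\alpha_k)-f^{(n)}(\beta_k)|\geq \delta$, assemble a diagonal self-adjoint operator $A$ whose spectrum realises the pairs $(\alpha_k,\beta_k)$, and construct rank-finite perturbations $X_m\to 0$ in $\Sc^p$ for which the scalar-level oscillation contradicts the $o(\|X_m\|_p)$ estimate defining $D^n_p f(A)$. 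Converting the scalar-level failure into a genuine violation of the operator-level Fr\'echet estimate, balancing the $\Sc^p$-weights on the rank-one components so that the lower bound is sharp, and exploiting the reflexivity and UMD property of $\Sc^p$ for $1<p<\infty$ to avoid endpoint pathologies, is the technically most delicate step.
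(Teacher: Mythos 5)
Your overall architecture matches the paper's (MOI realisation of the derivatives for (iii)$\Rightarrow$(i); diagonal operators with rank-one directions for (ii)$\Rightarrow$(iii)), but there are two concrete gaps. First, in (iii)$\Rightarrow$(i) you place the approximation of $f$ by smoother functions in the wrong role: you invoke it only to get continuity of $A\mapsto D^n_pf(A)$, while claiming that a ``Birman--Solomyak-type telescoping expansion'' already yields $n$-fold Fr\'echet differentiability. For $f$ merely in $C^n_b(\Rl)$ the telescoping/remainder estimate does \emph{not} close: the known results (e.g.\ \cite[Theorems 3.3--3.4]{LeMerdySkripka2019}) require more regularity, and the entire point of hypothesis (iii) is that uniform continuity of $f^{(n)}$ lets one mollify, $f_\varepsilon=\phi_\varepsilon\ast f$, obtain $\|(f-f_\varepsilon)^{(n)}\|_\infty\to 0$, apply the smooth-case theorem to $f_\varepsilon$, and transfer the $o(\|X\|_p)$ estimate to $f$ via the MOI bound $\|\Gamma(f)-\Gamma(f_\varepsilon)\|\leq C_{p,n}\|(f-f_\varepsilon)^{(n)}\|_\infty$. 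Without this three-term splitting the differentiability of $f$ itself is not established.

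Second, in (ii)$\Rightarrow$(iii) your induction base case is not available: the cited $n=1$ result characterises differentiability at \emph{bounded} self-adjoint operators and yields only $f\in C^1(\Rl)$, not uniform continuity of $f'$; the unbounded case is precisely what is new here (cf.\ \cite[Example 7.20]{KPPS}), so the base case carries the same difficulty as the inductive step and the induction buys nothing (boundedness of all $f^{(m)}$ follows at once from \cite[Proposition 3.9]{LeMerdySkripka2019}). Your contradiction scheme for uniform continuity can be made to work, but the step you defer is the whole content: one needs the identity $D^m\varphi_A(tQ_k)[Q_k,\ldots,Q_k]=f^{(m)}(\lambda_k+t)Q_k$ for rank-one spectral projections $Q_k$ of a diagonal $A$, together with the observation that the $o(\|X\|_p)$ in the definition of the $n$-th derivative is uniform over the directions $X=tQ_k$, $k\geq 0$. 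This converts operator differentiability into the \emph{uniform} scalar Taylor estimate $|f^{(n-1)}(\lambda_k+t)-f^{(n-1)}(\lambda_k)-tf^{(n)}(\lambda_k)|\leq\varepsilon|t|$ for all $k$, from which uniform continuity of $f^{(n)}$ follows by elementary calculus (taking $\{\lambda_k\}$ dense, or applying the estimate at both ends of each pair $\alpha_k,\beta_k$ in your contradiction setup). No appeal to reflexivity or the UMD property of $\Sc^p$ is needed — only $\|Q_k\|_p=1$ — and indeed the argument works verbatim for $p=1$.
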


The existence of $f\in C^{1}_b(\Rl)$ which 
is not $1$-time Fr\'echet differentiable on $\Sc^p$ at all
self-adjoint $A$ was established in \cite[Example 7.20]{KPPS}.
It is shown in \cite[Theorem 3.6]{LeMerdySkripka2019}
that a Lipschitz continuous function $f:\Rl\to \Cplx$ is $n$-times 
continuously Fr\'echet differentiable at every bounded 
self-adjoint operator if and only if $f\in C^n(\Rl)$ 
(the case $n=1$ goes back to \cite[Theorem 7.17]{KPPS}).
Theorem \ref{main_result} is a similar characterization
in the unbounded case. We refer the reader to the above 
mentioned papers and to \cite{ACDS} for more results on
this theme.

\section{Proof of the main result}
It is clear that \eqref{continuous_frechet_differentiability} 
implies \eqref{frechet_differentiability}. We prove Theorem \ref{main_result} 
by first showing that \eqref{frechet_differentiability} implies \eqref{continuity_condition}, 
and secondly that \eqref{continuity_condition} implies \eqref{continuous_frechet_differentiability}.

\begin{proof}[Proof that \eqref{frechet_differentiability} implies \eqref{continuity_condition}]
Assume that $f$ satisfies (ii). Then it follows from 
\cite[Proposition 3.9]{LeMerdySkripka2019}
that $f\in C^n_b(\Rl)$. Hence, we only
need to prove that $f^{(n)}$ is uniformly continuous. 
Let $\{\lambda_k\}_{k=0}^\infty$ be a dense sequence in $\Rl.$ Let $\{e_k\}_{k=0}^\infty$
be an orthonormal basis for $H$ and define an unbounded 
self-adjoint operator on $H$ by
\begin{equation*}
Ax = \sum_{k=0}^\infty \lambda_k\langle e_k,x\rangle e_k,\quad  
\mathrm{dom}(A) = \Bigl\{x \in H\;:\; \sum_{k=0}^\infty 
|\langle e_k,x\rangle\lambda_k|^2 < \infty\Bigr\}.
\end{equation*}
For $k\geq 0$, let $Q_k$ be the rank one projection on $H$ defined by
\begin{equation*}
Q_kx = e_k\langle e_k,x\rangle,\quad x\in H.
\end{equation*}

We denote $\varphi_{A} : = \varphi_{A,p}$ for brevity.
By assumption, $\varphi_A$ is $n$-times Fr\'echet differentiable 
on $\Sc^p_{sa}$. Repeating identically the argument of 
\cite[Proposition 3.9]{LeMerdySkripka2019}, we obtain that 
\begin{equation}\label{0-derivative_at_Q_k}
\varphi_A(tQ_k) =(f(\lambda_k + t) -f(\lambda_k))Q_k
\quad k\geq 0,\,t \in \Rl,
\end{equation}
and
for any $1\leq m\leq n$, 
\begin{equation}\label{derivative_at_Q_k}
D^m\varphi_A(tQ_k)[Q_k,Q_k,\ldots,Q_k] = 
f^{(m)}(\lambda_k+t)Q_k,\quad k\geq 0,\,t \in \Rl,
\end{equation}
where $D^m\varphi_A$ is 
the $m$th order Fr\'echet derivative of $\varphi_A$, see (\ref{Dm}).
We write $D^0\varphi_A=\varphi_A$ by convention.
Applying \eqref{derivative_at_Q_k} with $m=n$, and either
\eqref{0-derivative_at_Q_k} if $n=1$ or
\eqref{derivative_at_Q_k} with $m=n-1$ if $n\geq 2$, we obtain
\begin{align*}
D^{n-1}\varphi_A(tQ_k)&[Q_k,\ldots,Q_k]-D^{n-1}
\varphi_A(0)[Q_k,\ldots,Q_k]-D^n\varphi_A(0)[tQ_k,Q_k,\ldots,Q_k]\\
&= (f^{(n-1)}(\lambda_k+t)-f^{(n-1)}(\lambda_k)-tf^{(n)}(\lambda_k))Q_k.
\end{align*}
Since $\varphi_A$ is $n$-times Fr\'echet differentiable, we deduce that
$$
\|(f^{(n-1)} (\lambda_k+t)-f^{(n-1)}(\lambda_k)-tf^{(n)}(\lambda_k))Q_k\|_p
= o(|t|\|Q_k\|_p)\|Q_k\|_p^{n-1},\quad |t|\to 0,
$$
uniformly in $k$.

Using $\|Q_k\|_p=1$, it follows that for every $\varepsilon>0$ 
there exists $\eta>0$ such that if $0 < |t| < \eta$ then
\begin{equation*}
|f^{(n-1)}(\lambda_k+t)-f^{(n-1)}(\lambda_k)-tf^{(n)}(\lambda_k)| 
\leq \varepsilon |t|,\quad k\geq 0.
\end{equation*}
Hence, for $0 < |t|< \eta$ and all $k\geq 0$ we have
\begin{equation*}
\left|\frac{f^{(n-1)}(\lambda_k+t)-f^{(n-1)}
(\lambda_k)}{t}-f^{(n)}(\lambda_k)\right| \leq \varepsilon.
\end{equation*}
Since $k$ is arbitrary, $\{\lambda_k\}_{k=0}^\infty$ is 
dense and both $f^{(n)}$ and $f^{(n-1)}$ are continuous, it follows that
\begin{equation*}
\left|\frac{f^{(n-1)}(s+t)-f^{(n-1)}(s)}{t}-f^{(n)}(s)\right| 
\leq \varepsilon,\quad s \in \Rl, \, 0<|t|<\eta.
\end{equation*}
Equivalently,
\begin{equation*}
\sup_{0<|s-r|<\eta}\left|
\frac{f^{(n-1)}(s)-f^{(n-1)}(r)}{s-r}-f^{(n)}(s)\right| \leq \varepsilon.
\end{equation*}
By the triangle inequality, whenever $0 < |s-r| <\eta$ we have
\begin{align*}
|f^{(n)}(s)-f^{(n)}(r)| &\leq \left|f^{(n)}(s)-\frac{f^{(n-1)}(s)-
f^{(n-1)}(r)}{s-r}\right| + 
\left|\frac{f^{(n-1)}(s)-f^{(n-1)}(r)}{s-r}-f^{(n)}(r)\right|\\
&\leq 2\varepsilon.
\end{align*}
This shows that for every $\varepsilon>0$ there exists $\eta>0$ such that
\begin{equation*}
\sup_{0<|s-r|<\eta} |f^{(n)}(s)-f^{(n)}(r)| \leq 2\varepsilon.
\end{equation*}
That is, $f^{(n)}$ is uniformly continuous.
\end{proof}

The following simple lemma is well-known, but we supply a proof for convenience.

\begin{lemma}\label{convolution_lemma}
Let $\phi$ be a smooth compactly supported function supported 
in the interval $(-1,1)$ such that $\int_{-\infty}^\infty \phi(s)\,ds =1.$ Denote
\begin{equation*}
\phi_{\varepsilon}(t) := \varepsilon^{-1} \phi(\varepsilon^{-1} t),\quad \varepsilon>0.
\end{equation*}
If $f \in C_b^n(\Rl)$, then for all $\varepsilon>0$ we have 
$\phi_{\varepsilon}\ast f \in C^\infty_b(\Rl)$. 
If in addition $f^{(n)}$ is uniformly continuous, then 
\begin{equation*}
 \lim_{\varepsilon\to 0} \|f^{(n)}-(\phi_{\varepsilon}\ast f)^{(n)}\|_\infty = 0.
\end{equation*}
\end{lemma}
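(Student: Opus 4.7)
The plan is to prove both assertions by a routine mollifier argument, being careful that $f$ itself is not assumed bounded (only its derivatives of order $1, \ldots, n$ are).

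For the first assertion, since $\phi_\varepsilon \in C_c^\infty(\Rl)$ and $f$ is Lipschitz (hence locally integrable with at most linear growth), the convolution $\phi_\varepsilon * f$ is well-defined and smooth, with derivatives that can be placed on either factor by differentiation under the integral sign. To show each derivative is bounded, I would split into two cases. For $1 \leq k \leq n$, place all derivatives on $f$ to get $(\phi_\varepsilon * f)^{(k)} = \phi_\varepsilon * f^{(k)}$, and conclude via Young's inequality that $\|(\phi_\varepsilon * f)^{(k)}\|_\infty \leq \|\phi_\varepsilon\|_1 \|f^{(k)}\|_\infty < \infty$. For $k > n$, put only $n$ derivatives on $f$ and the remaining $k-n$ on $\phi_\varepsilon$, giving $(\phi_\varepsilon * f)^{(k)} = \phi_\varepsilon^{(k-n)} * f^{(n)}$, which is again bounded by Young's inequality since $\phi_\varepsilon^{(k-n)} \in L^1(\Rl)$. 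This case split is what prevents us from ever differentiating the possibly unbounded function $f$ more than $n$ times, and establishes $\phi_\varepsilon * f \in C_b^\infty(\Rl)$.

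For the second assertion, using $(\phi_\varepsilon * f)^{(n)} = \phi_\varepsilon * f^{(n)}$ together with the normalization $\int \phi_\varepsilon = 1$, I would write
\begin{equation*}
(\phi_\varepsilon * f)^{(n)}(t) - f^{(n)}(t) = \int_{-\varepsilon}^{\varepsilon} \phi_\varepsilon(s)\bigl(f^{(n)}(t-s) - f^{(n)}(t)\bigr)\,ds.
\end{equation*}
Given $\delta > 0$, uniform continuity of $f^{(n)}$ supplies $\eta > 0$ such that $|s| < \eta$ implies $|f^{(n)}(t-s) - f^{(n)}(t)| < \delta$ for every $t \in \Rl$. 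For $\varepsilon < \eta$, the support of $\phi_\varepsilon$ is contained in $(-\eta, \eta)$, so the integrand is pointwise dominated by $|\phi_\varepsilon(s)|\,\delta$, which gives $\|(\phi_\varepsilon * f)^{(n)} - f^{(n)}\|_\infty \leq \|\phi\|_1\,\delta$. Letting $\delta \to 0$ yields the claimed convergence.

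There is no genuine obstacle here. The only point that requires any care is the case split in the first part (to ensure that no more than $n$ derivatives ever fall on $f$), since $f$ itself may be unbounded; everything else is a direct application of Young's inequality and uniform continuity.
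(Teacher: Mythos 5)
Your proof is correct and follows essentially the same route as the paper: both use the identity $(\phi_\varepsilon\ast f)^{(n)}=\phi_\varepsilon\ast f^{(n)}$ to reduce the convergence claim to a standard mollifier estimate combining the normalization $\int\phi_\varepsilon=1$, the support condition, and uniform continuity of $f^{(n)}$. Your treatment of the first assertion (the case split ensuring at most $n$ derivatives ever fall on the possibly unbounded $f$) is more detailed than the paper, which dismisses that part as clear, but the substance is the same.
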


\begin{proof}
The assertion that $\phi_{\varepsilon}\ast f \in C^\infty_b(\Rl)$ is clear
from the assumption on $\phi$.
Note further that for all $f \in C_b^n(\Rl)$,
\begin{equation*}
(\phi_{\varepsilon}\ast f)^{(n)} = \phi_{\varepsilon}\ast f^{(n)}.
\end{equation*}
Hence, it suffices to take $n=0$ and prove only that 
if $f$ is uniformly continuous then
\begin{equation}\label{n=0_case}
\lim_{\varepsilon\to 0} \|f-\phi_{\varepsilon}\ast f\|_{\infty} = 0.
\end{equation}
Since $\int_{-\infty}^\infty \phi(s)\,ds = 1$, 
it follows that $\int_{-\infty}^\infty \phi_{\varepsilon}(s)\,ds = 1$ and hence
\begin{equation*}
f(t)-(\phi_{\varepsilon}\ast f)(t) = 
\int_{-\infty}^\infty \phi_{\varepsilon}(t-s)(f(t)-f(s))\,ds,\quad t \in \Rl.
\end{equation*}
Since $\phi$ is supported in the set $(-1,1)$, $\phi_{\varepsilon}$ 
is supported in $(-\varepsilon,\varepsilon)$ and therefore
\begin{equation*}
f(t)-(\phi_{\varepsilon}\ast f)(t) = 
\varepsilon^{-1}\int_{t-\varepsilon}^{t+\varepsilon} 
\phi(\varepsilon^{-1} (t-s))(f(t)-f(s))\,ds,\quad t \in \Rl.
\end{equation*}
By the triangle inequality,
\begin{equation*}
|f(t)-(\phi_{\varepsilon}\ast f)(t)| 
\leq 2\sup_{s \in \Rl, |t-s|<\varepsilon}|f(s)-f(t)|,\quad t \in \Rl.
\end{equation*}
Therefore,
\begin{equation*}
\|f-\phi_{\varepsilon}\ast f\|_{\infty} 
\leq 2\sup_{t,s \in \Rl, |t-s|<\varepsilon} |f(s)-f(t)|.
\end{equation*}
Due to the uniform continuity of $f$, the right hand-side goes to $0$ when 
$\varepsilon\to 0$.
Therefore (\ref{n=0_case}) holds true.
\end{proof}

\begin{proof}[Proof that \eqref{continuity_condition} implies 
\eqref{continuous_frechet_differentiability}]
This result is an improvement of \cite[Theorem 3.4]{LeMerdySkripka2019}.
Our approach is based on the proofs of the latter theorem 
and of \cite[Lemma 3.12]{LeMerdySkripka2019}. We use
the multiple operator integrals $T^{A,\ldots,A}_{f^{[k]}}$ from the latter
paper. We let $S_k$ denote the symmetric group of degree $k$, for all $k\geq 1$.

Let $f \in C^{n}_b(\Rl)$ be such that $f^{(n)}$ is uniformly continuous, 
let $A$ be a self-adjoint operator on $H$ 
and let $\phi$ be as in Lemma \ref{convolution_lemma}. 
For brevity, denote $f_{\varepsilon} := \phi_{\varepsilon}\ast f$ for all
$\varepsilon >0$.
Then the statement of the lemma implies that for every 
$\varepsilon>0$ the function $f_{\varepsilon}$ belongs 
to $C^\infty_b(\Rl)$. Hence, by \cite[Theorem 3.3]{LeMerdySkripka2019}
$f_{\varepsilon}$ is $n$-times continuously Fr\'echet differentiable, 
and 
\begin{equation*}
D^n_p f_{\varepsilon}(A)[X_1,X_2,\ldots,X_n] = 
\sum_{\sigma \in S_n} T^{A,\ldots,A}_{f_{\varepsilon}^{[n]}}
(X_{\sigma(1)},X_{\sigma(2)},\ldots,X_{\sigma(n)}),\quad X_1,\ldots,X_n\in \Sc^{p}.
\end{equation*}

Since $f_{\varepsilon}$ is $n$-times Fr\'echet differentiable, 
for every $\delta>0$ there exists a $\delta_{\varepsilon}'>0$
such that if $\|X\|_p< \delta_{\varepsilon}'$, $X\in \Sc^{p}_{sa}$, then
\begin{align*}
&\|(D^{n-1}_pf_{\varepsilon}(A+X)-D^{n-1}_pf_{\varepsilon}(A))
[X_1,X_2,\ldots,X_{n-1}]-D^n_p f_{\varepsilon}(A)[X_1,\ldots,X_{n-1},X]\|_p\\
&\leq \delta\|X\|_p\|X_1\|_p\cdots\|X_{n-1}\|_p,
\end{align*}
for all $X_1,\ldots, X_{n-1}$ in $\Sc^{p}$.

Define
\begin{equation}\label{Gamma}
\Gamma(f)[X_1,\ldots,X_n] := 
\sum_{\sigma\in S_n} T^{A,\ldots,A}_{f^{[n]}}(X_{\sigma(1)},
\ldots,X_{\sigma(n)}),\quad X_1,\ldots,X_n \in \Sc^p.
\end{equation}
It follows from above 
that $\Gamma(f_{\varepsilon})=D_p^n f_\varepsilon(A)$
for all $\varepsilon>0.$
Thus we have
\begin{align*}
\|(D^n_pf_{\varepsilon}(A)-\Gamma(f))[X_1,\ldots,X_{n-1},X]\|_p &= 
\|(\Gamma(f_\varepsilon)-\Gamma(f))[X_1,\ldots,X_{n-1},X]\|_p\\
&\leq C_{p,n}\|(f_{\varepsilon}-f)^{(n)}\|_{\infty}\|X\|_p\|X_1\|_{p}\cdots \|X_{n-1}\|_p,
\end{align*}
by \cite[Theorem 2.2]{LeMerdySkripka2019}, where
$C_{p,n}>0$ is a constant only depending on $p$ and $n$.

Likewise, using
\cite[(3.24) $\&$ (3.25)]{LeMerdySkripka2019}, we have a similar estimate
\begin{align*}
\|(D^{n-1}_p(f-f_{\varepsilon})(A+X) & -D^{n-1}_p(f-f_{\varepsilon})(A))
[X_1,\ldots,X_{n-1}]\|_p \\
& \leq C_{p,n}\|(f_{\varepsilon}-f)^{(n)}\|_\infty\|X\|_p\|X_1\|_{p}\cdots \|X_{n-1}\|_p.
\end{align*}

Therefore, for every $\delta>0$ and every
$\varepsilon>0$, there exists a $\delta_\varepsilon'>0$ 
such that for every $X\in \Sc^{p}_{sa}$ with
$\|X\|_p < \delta_\varepsilon'$, and for all $X_1,\ldots,X_{n-1}$ in $\Sc^{p}$, we have
\begin{align*}
&\|(D^{n-1}_pf(A+X)-D^{n-1}_pf(A))[X_1,\ldots,X_{n-1}]-\Gamma(f)[X_1,\ldots,X_{n-1},X]\|_p\\
&\leq \|(D^{n-1}_pf_{\varepsilon}(A+X)-D^{n-1}_pf_{\varepsilon}(A))
[X_1,\ldots,X_{n-1}]-D^nf_{\varepsilon}(A)[X_1,\ldots,X_{n-1},X]\|_p\\
&\quad +\|(D^{n-1}_p(f-f_{\varepsilon})
(A+X)-D^{n-1}_p(f-f_{\varepsilon})(A))[X_1,\ldots,X_{n-1}]\|_p\\
&\quad +\|(D^{n}f_{\varepsilon}(A)-\Gamma(f))[X_1,\ldots,X_{n-1},X]\|_p\\
&\leq \delta\|X\|_p\|X_1\|_p\cdots\|X_{n-1}\|_p
+2C_{p,n}\|(f-f_{\varepsilon})^{(n)}\|_\infty\|X\|_p\|X_1\|_p\cdots\|X_{n-1}\|_p.
\end{align*}

Let $\delta > 0$. 
From Lemma \ref{convolution_lemma}, we may select $\varepsilon>0$ sufficiently small 
so that $\|(f-f_{\varepsilon})^{(n)}\|_\infty<\delta C_{p,n}^{-1}$.
Then we find $\delta':=\delta_\varepsilon'$ such that
\begin{align*}
\|(D^{n-1}_pf(A+X)-D^{n-1}_pf(A))[X_1,\ldots,X_{n-1}]&-\Gamma(f)[X_1,\ldots,X_{n-1},X]\|_p\\
 &\leq 3\delta \|X\|_p\|X_1\|_p\cdots\|X_{n-1}\|_p
\end{align*}
for $X\in \Sc^{p}_{sa}$ with
$\|X\|_p < \delta'$, and for all $X_1,\ldots,X_{n-1}$ in $\Sc^{p}$.
Since $\delta>0$ is arbitrary, we arrive at
\begin{align*}
\|(D^{n-1}_pf(A+X)-D^{n-1}_pf(A))[X_1,\ldots,X_{n-1}]&-\Gamma(f)[X_1,\ldots,X_{n-1},X]\|_p\\
=o(\|X\|_p)\|X_1\|_p\cdots \|X_{n-1}\|_p
\end{align*}
as $\|X\|_p\to 0$, uniformly for $X_1,\ldots,X_{n-1}\in \Sc^{p}$.
Hence $f$ is $n$-times Fr\'echet differentiable at $A$ in $\Sc^{p}$,
with 
\begin{equation}\label{Gamma2}
\Gamma(f) = D^n_pf(A).
\end{equation}

Let us now check that $X\mapsto D^n_pf(A+X)$ is continuous on $\Sc^{p}_{sa}$.
It suffices to prove continuity at $0$.
It follows from (\ref{Gamma2}), (\ref{Gamma}) 
and \cite[Theorem 2.2]{LeMerdySkripka2019}
that 
$$
\Vert D^n_p(f-f_\varepsilon)(A+X) - 
D^n_p(f-f_\varepsilon)(A)\Vert\leq 
C_{p,n}\|(f_{\varepsilon}-f)^{(n)}\|_{\infty},
$$
where the norm in the left hand-side is computed in 
the Banach space $\Bc_{n}(\Sc^p\times \cdots\times \Sc^p,\Sc^p)$.
We have
\begin{align*}
\Vert D^n_p f(A+X) - 
D^n_p f(A)\Vert & \leq \Vert D^n_p(f-f_\varepsilon)(A+X) - 
D^n_p(f-f_\varepsilon)(A)\Vert \\ & + \Vert D^n_p f_\varepsilon(A+X) - 
D^n_p f_\varepsilon(A)\Vert,
\end{align*}
and hence
$$
\Vert D^n_p f(A+X) - 
D^n_p f(A)\Vert\leq C_{p,n}\|(f_{\varepsilon}-f)^{(n)}\|_{\infty} + \Vert D^n_p f_\varepsilon(A+X) - 
D^n_p f_\varepsilon(A)\Vert.
$$
By \cite[Theorem 3.3]{LeMerdySkripka2019}, the mapping $X\mapsto D^n_pf_\varepsilon(A+X)$ is continuous
for any $\varepsilon>0$. Recall that $\|(f_{\varepsilon}-f)^{(n)}\|_{\infty} \to 0$
when $\varepsilon\to 0$. We deduce 
that
$$
\lim_{X\to 0} \Vert D^n_p f(A+X) - 
D^n_p f(A)\Vert =0,
$$
which completes the proof.
\end{proof}

\section{Final comments}
We provide two additional comments.

\smallskip
\noindent
{\it Comment 1.}
It follows from the proof of Theorem \ref{main_result} that if $f$ satisfies the conditions 
of this theorem,  then 
\begin{equation}\label{perturbation_formula}
D^n_pf(A)[X_1,\ldots,X_n] = \sum_{\sigma\in S_n} T^{A,A,\cdots,A}_{f^{[n]}}
(X_{\sigma(1)},\ldots,X_{\sigma(n)}),\quad X_1,\ldots,X_n\in \Sc^p,
\end{equation} 
for all self-adjoint operators $A$ on $H$. Further the argument 
in \cite[(3.53)]{LeMerdySkripka2019} shows the following Taylor formula:
\begin{align}\label{taylor_expansion}
f(A+X) &= f(A)+D^1_pf(A)[X]+\frac{1}{2!}D^2_pf(A)[X,X]+\cdots\\
&\quad \cdots +\frac{1}{(n-1)!}D^{n-1}_pf(A)[X,\ldots,X]+
T^{A+X,A,\ldots,A}_{f^{[n]}}(X,\ldots,X).
\end{align}

\smallskip
\noindent
{\it Comment 2.} The Potapov-Sukochev result \cite[Theorem 1]{PS-acta} 
holds true on all non-commutative $L^p$-spaces (not only on
Schatten classes). However the results discussed in this note
cannot be extended beyond Schatten classes. The lack of Fr\'echet
differentiability in a general context was already mentioned
in \cite[Introduction]{DPS}. For the sake of completeness,
we provide a simple example in
the commutative case. Fix
$1<p<\infty$ and let $f\in C^1_b(\Rl)$ such that $f(t)=t^2$
for $\vert t\vert\leq 2$. Then for any 
$X\in L^\infty[0,1]\subset L^p[0,1]$ with $\Vert X\Vert_\infty \leq 1$, we have 
$f(1+X)-f(1) =2X+X^2$. If $f$ was $1$-time Fr\'echet differentiable
in $L^p[0,1]$, $D_p^1f(1)$ would be the mapping $X\mapsto 2X$ and we would have
\begin{equation}\label{o}
\Vert X^2\Vert_p=o(\Vert X\Vert_p).
\end{equation} 
For all $\varepsilon\in (0,1)$, the indicator
function $X=\chi_{[0,\varepsilon]}$ satisfies 
$\Vert \chi_{[0,\varepsilon]}^2\Vert_p=
\Vert \chi_{[0,\varepsilon]}\Vert_p = 
\varepsilon^{\frac{1}{p}}$. This 
contradicts (\ref{o}).

\smallskip
\noindent
{\it Comment 3.} Repeating the arguments in the proof that \eqref{frechet_differentiability} implies \eqref{continuity_condition} of Theorem
\ref{main_result}, we can also see that uniform continuity of $f^{(n)}$ is a necessary condition for $f$ to be $n$-times Fr\'echet 
differentiable in $\Sc^1$ at every self-adjoint operator.

\bigskip
\noindent
{\bf Acknowledgements.} 
The first author was supported by the French 
``Investissements d'Avenir" program, 
project ISITE-BFC (contract ANR-15-IDEX-03).

\bigskip

\vskip 0.5cm

\end{document}